\definecolor{webgreen}{rgb}{0,.5,0}
\definecolor{webbrown}{rgb}{.6,0,0}
\newcommand{\seqnum}[1]{\href{http://oeis.org/#1}{\underline{#1}}}
\begin{document}


\theoremstyle{plain}
\newtheorem{theorem}{Theorem}
\newtheorem{corollary}[theorem]{Corollary}
\newtheorem{lemma}[theorem]{Lemma}
\newtheorem{proposition}[theorem]{Proposition}

\theoremstyle{definition}
\newtheorem{definition}[theorem]{Definition}
\newtheorem{example}[theorem]{Example}
\newtheorem{conjecture}[theorem]{Conjecture}

\theoremstyle{remark}
\newtheorem{remark}[theorem]{Remark}

\begin{center}
\vskip 1cm{\LARGE\bf Sequences of odd numbers, even numbers\\[5pt] and integer squares: gaps in frequency\\[5pt] distributions of unit's digits of minor totals}
\vskip 1cm
{\large  Vladimir L. Gavrikov\\
Institute of Ecology and Geography \\
Siberian Federal University\\
600041 Krasnoyarsk, pr. Svobodnyi 79\\
Russian Federation\\
\href{mailto:vgavrikov@sfu-kras.ru}{\tt vgavrikov@sfu-kras.ru}\\
}
\end{center}

\vskip .2 in

\begin{abstract}
In the paper, I consider appearance of unit's digits in minor totals of a few integer sequences. The sequences include the sequence of even integers, sequence of odd integers and Faulhaber polynomial at $p = 2$. Application of difference tables allows predicting of which digits can appear as unit's digits in minor totals of the sequences. Absence of some digits (``gaps'' in frequency distributions) depends often on numbering system applied. However, in case of odd numbers' integers the gaps are found under all numbering systems with bases from $3$ to $10$.
\end{abstract}

\section{Introduction}
Infinite natural sequence $1,\ 2,\ 3,\ \dots$ has correspondent minor totals $1,\ 3,\ 6,\ 10,\ \dots$ the values of which are given by the well known formula:
\begin{equation}
S_n = \frac{n(n + 1)}{2},
\label{eq:1}
\end{equation}
where $S_n$ is the minor total of the first $n$ members of the sequence. The minor totals possess some interesting properties. In particular, they coincide with binomial coefficients at quadratic terms of the polynomial decomposition of the binomial theorem $(1 + x)^r$, $x$ and $r$ being a real variable and an integer exponent, respectively. One can see the coincidence directly from the famous Pascal's triangle (Fig. \ref{fig1}) in which the sequences in diagonals represent coefficients at linear, quadratic, cubic etc. terms of the theorem.

\begin{figure}[tb]
\center{\includegraphics[width=0.5\textwidth]{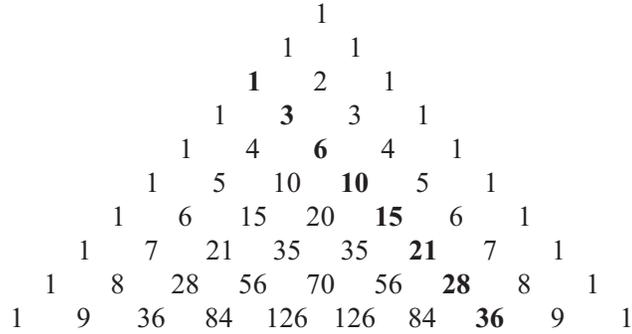}}
\caption{Pascal's triangle. Minor totals of natural sequence (binomial coefficients at the quadratic term) are given in bold face.}
\label{fig1}
\end{figure}

Binomial coefficients have been extensively studied, the focus being often on divisibility of the coefficients by primes \cite{Chen, Guo2013, Guo2014, Pomerance, Winberg}. Recently Gavrikov \cite{Gavrikov} showed the binomial coefficients at quadratic terms (that is, minor totals of the natural sequence) to possess some other properties. The frequencies with which the unit's digits appear in the minor totals may be in part predicted---some digits never appear as unit's digits in the minor totals, i.e., they have zero frequencies. For example, in base-ten numbering system, the minor totals never end with $2,\ 4,\ 7,\ 9$ as unit's digits, which can be strictly proven. On the other hand, in base-eight system, every digit from $0$ to $7$ may appear as unit's digit in the values of the minor totals.

In terms of modular arithmetic, the last propositions may be presented as $S_n \not\equiv \mathsf{A} \pmod {10}, \mathsf{A} \in \{2,\ 4,\ 7,\ 9 \}$ and $S_n \equiv \mathsf{B} \pmod 8, \mathsf{B} \in \{0,\ 1,\ 2,\ 3,\ 4,\ 5,\ 6,\ 7 \}$. Gavrikov \cite{Gavrikov} studied the presence or absence of zero frequencies (gaps) in frequency distribution of unit's digits of minor totals for numbering systems with bases $7$ and $8$, with an approach of ``difference tables'' being suggested. The approach theoretically allows one to predict the gaps in unit's digits for any numbering system.

While the natural sequence is presumably the most general nonnegative integer sequence other integer sub-sequences present interest within the topic. In this study, I consider a number of nonnegative integer sequences with the aim to find out whether the distributions of unit's digits in their minor totals contain gaps. The list of sequences includes sequence of even numbers, sequence of odd numbers, sequences of integer powers of natural numbers.

The basic equation of the analysis looks like
\begin{equation}
S_{Lk+i} = L\cdot m + j,
\label{eq:2}
\end{equation}
where the term on the left is a notation of an integer sequence minor total in base-$L$ numbering system. The term on the right is a representation of the value of the minor total in the same base-$L$ system. Letters $i, j$ denote unit's digits, obviously $0 \leq  i, j \leq (L-1)$. Values of $k$ and $m$ are nonnegative integers.

In the following, to avoid a confusion the letter $S$ in Eq.~(\ref{eq:2}) is substituted by other letters ($T, V, W, \dots$) to denote other integer sequences.

The task of the analysis is to find out if the solution of Eq.~(\ref{eq:2}) can be found in nonnegative integers. That is, if $L$ is given and $k$ and $m$ remain nonnegative integers. Those $j$ values for which the solutions in Eq.~(\ref{eq:2}) is found are possible unit's digits in the minor totals of a given integer sequence.

For minor totals of the natural sequence (Eq.~\ref{eq:1}), it can be empirically shown that the gaps appear in numbering systems with bases $5$, $6$, $7$, $9$, $10$ while there are no gaps in systems with bases $4$, $8$, $16$. Consequently, in the following analyses the ten-base numbering system is taken as a representative of system where the gaps were observed and eight-base numbering system as a representative of systems where the gaps may be not observed.

\section{Sequence of even integers}
Getting of an expression for minor totals for the sequence of even integers $2,\ 4,\ 6,\ 8, \dots$ is quite easy. It is enough to take $2$ out of the brackets, which gives an expression $2(1 + 2 + 3 + \dots )$, i.e. the minor total is double of the minor total for the natural sequence (Eq.~\ref{eq:1}). Thus,
\begin{equation}
T_n = n(n + 1),
\label{eq:3}
\end{equation}
$T$ being the minor total of the sequence of first $n$ even integers.

Obviously, sums of even integers can have in unit's digits only those from $\{ 0,\ 2,\ 4,\ 6,\ 8\}$. It can be observed that some base-ten digits do not empirically appear as unit's digits in minor totals of the sequence of even integers (Fig. \ref{fig2}a). More precisely, the minor totals may not to end with $4$ or $8$. On the other hand, eight-based minor totals contain any base-eight even digits as unit's digits (fig. \ref{fig2}b).

\begin{figure}[tb]
\begin{minipage}[h]{0.49\textwidth}
\center{\includegraphics[width=0.8\textwidth]{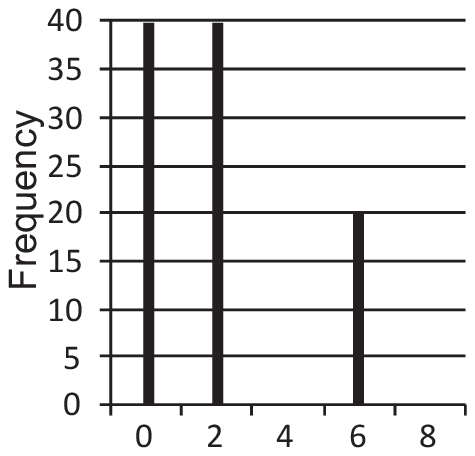} \\ a}
\end{minipage}
\hfill
\begin{minipage}[h]{0.49\textwidth}
\center{\includegraphics[width=0.8\textwidth]{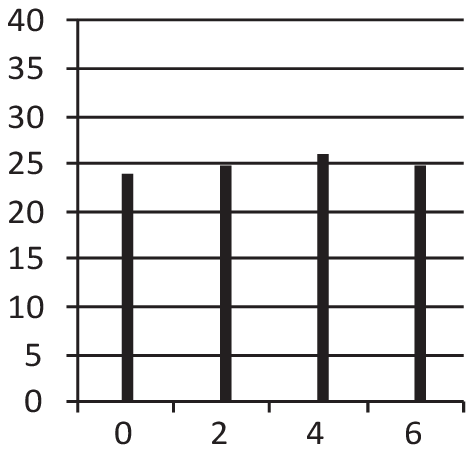} \\ b}
\end{minipage}
\caption{Empirical appearance of unit's digits in minor totals of the sequence of even integers. \textbf{a} -- base-ten numbering system; \textbf{b} -- base-eight system. The frequencies are computed for first hundred minor totals.}
\label{fig2}
\end{figure}

To get conditions determining the appearance of the unit's digits, let us analyze the basic equation in the form
\begin{equation}
T_{Lk+i} = L\cdot m + j.
\label{eq:4}
\end{equation}

Taking into account Eq.~(\ref{eq:3}) gives
\begin{equation}
T_{Lk+i} = L^2k^2 + L\cdot k(2i + 1) + i(i + 1) = L\cdot m + j.
\label{eq:5}
\end{equation}

For further analysis, $m$ may be expressed from Eq.~(\ref{eq:5}) as
\begin{equation}
m = L\cdot k^2 + k(2i + 1) + \frac{i(i + 1) - j}{L}.
\label{eq:6}
\end{equation}

Provided $i,\ j,\ k,\ L$ all are integers, the question is whether $m$ is also a nonnegative integer. Obviously, $L\cdot k^2 + k(2i + 1)$ is always an integer while $(i(i + 1) - j)/L$ may be fractional.

Because $i$ and $j$ vary freely between $0$ and $L - 1$ the expression $(i(i + 1) - j)$ is a sort of two-dimensional table containing all the possible differences between $i(i + 1)$ and $j$, a \textit{difference table} that has dimensions $L\times L$.

Thus, considering all the differences and finding those that are divisible by $L$ without a remainder helps to find the $j$ values that ensure that the Eq.~(\ref{eq:4}) can be solved in nonnegative integers. Those $j$s are therefore the unit's digits that can appear in minor totals of the sequence of even numbers.

\begin{proposition}
In base-ten numbering system, among the minor totals of the sequence of even numbers there are no such than have $4$ and $8$ as unit's digits, i.e., $T_n \not\equiv 4 \pmod {10}$ and $T_n \not\equiv 8 \pmod {10}$.
\end{proposition}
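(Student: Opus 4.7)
The plan is to specialize the general framework of equations (\ref{eq:4})--(\ref{eq:6}) to $L=10$ and read off the permissible unit's digits directly from the $10\times 10$ difference table for $T_n=n(n+1)$.

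First I would write any $n\geq 0$ as $n=10k+i$ with $0\le i\le 9$ and expand
\[
T_{10k+i}=(10k+i)(10k+i+1)=100k^{2}+10k(2i+1)+i(i+1).
\]
The first two terms on the right are multiples of $10$, so $T_{n}\equiv i(i+1)\pmod{10}$. Equivalently, in the notation of (\ref{eq:6}), the quantity $10k^{2}+k(2i+1)$ is automatically an integer, and the existence of a nonnegative integer $m$ solving (\ref{eq:4}) with unit's digit $j$ reduces to solvability of the single congruence
\[
i(i+1)\equiv j\pmod{10}, \qquad i\in\{0,1,\ldots,9\}.
\]

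Next I would tabulate the relevant column of the difference table: the residues $i(i+1)\bmod 10$ for $i=0,1,\ldots,9$ are
\[
0,\;2,\;6,\;2,\;0,\;0,\;2,\;6,\;2,\;0,
\]
so the set of attainable residues is exactly $\{0,2,6\}$. (One could also shortcut this by noting the symmetry $i(i+1)\equiv(9-i)(10-i)\pmod{10}$, which halves the work.) Since $4\notin\{0,2,6\}$ and $8\notin\{0,2,6\}$, the congruences $i(i+1)\equiv 4\pmod{10}$ and $i(i+1)\equiv 8\pmod{10}$ have no solution. Hence no $n$ produces $T_{n}\equiv 4\pmod{10}$ or $T_{n}\equiv 8\pmod{10}$, which is the claim.

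There is essentially no serious obstacle here: once the reduction to the residues of $i(i+1)$ modulo $10$ has been made, the argument is a finite verification over ten values of $i$. The only point that deserves a line of care is the remark that by writing $n=10k+i$ we genuinely cover every nonnegative integer $n$, so that the nonappearance of $4$ and $8$ in the tabulated column immediately forces their global nonappearance among the minor totals $T_{n}$.
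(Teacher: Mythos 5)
Your proof is correct and follows essentially the same route as the paper: reducing $T_{10k+i}$ modulo $10$ to $i(i+1)$ and checking the ten residues, which is exactly what the paper's $10\times 10$ difference table encodes (a difference $i(i+1)-j$ divisible by $10$ existing if and only if $j$ lies in your residue set $\{0,2,6\}$). Your presentation as a single congruence is a slightly cleaner packaging of the same finite verification.
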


\begin{proof}
A $10\times 10$ difference table should be considered. Because $j$ cannot be odd it is enough to consider only even $j$ values.

\begin{table}[htb]
  \centering
    \begin{tabular}{rrrrrrrrrrrr}
    \hline
    $j$&&\multicolumn{10}{c}{$i(i + 1)$}\\ \cline{3-12}
     &    & 0     & 2     & 6     & 12     & 20    & 30    & 42 & 56 & 72 & 90 \\
    \hline
    \textbf{0} &   & \cellcolor{lightgray} 0    & 2     & 6     & 12     & \cellcolor{lightgray}20    &\cellcolor{lightgray} 30    & 42 & 56 & 72 &\cellcolor{lightgray} 90 \\
    \textbf{2} &   & -2    & \cellcolor{lightgray} 0     & 4     &\cellcolor{lightgray}10     & 18    &  28   & \cellcolor{lightgray} 40 & 54 &\cellcolor{lightgray} 70 & 88 \\
    4 &   & -4    & -2    & 2     & 8     & 16    & 26    & 38 & 52 & 68 & 86 \\
    \textbf{6} &   & -6    & -4    & \cellcolor{lightgray} 0  & 6     & 14     & 24    & 36 & \cellcolor{lightgray}50 & 66 & 84 \\
    8 &   & -8    & -6    & -2    & 4     & 12     & 22    & 34 & 48 & 64 & 82 \\
    \hline
    \end{tabular}
\caption{A $10\times 10$ difference table. Values of differences divisible by $10$ without a remainder are given on gray background. Values of $j$ satisfying the condition ``$m$ is a nonnegative integer'' are given in bold face.}    
  \label{tab:1}
\end{table}

Table \ref{tab:1} shows that $j$s satisfying the divisibility of the differences by $10$ without a remainder are $0$, $2$ and $6$. No such differences are found for $j$ equal to $4$ or $8$. Thus, the solution of Eq.~(\ref{eq:4}) cannot contain $4$ and $8$ and $T_n \not\equiv 4 \pmod {10}$ and $T_n \not\equiv 8 \pmod {10}$.
\end{proof}

\begin{proposition}
In base-eight numbering system, all digits of the system appear as unit's digits of the minor totals, i.e., $T_n \equiv \mathsf{C} \pmod {10}, \mathsf{C} \in \{0,\ 2,\ 4,\ 6\}$.
\end{proposition}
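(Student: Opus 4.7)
The plan is to apply the difference-table method just developed, now specialized to $L = 8$. By Eq.~(\ref{eq:6}), an even digit $j \in \{0,2,4,6\}$ can appear as the unit's digit of some $T_n$ precisely when there exists $i \in \{0,1,\ldots,7\}$ for which $i(i+1) - j$ is divisible by $8$; once such an $i$ is fixed, the corresponding $m$ is automatically a nonnegative integer for all sufficiently large $k$, since the dominant term $8k^2 + k(2i+1)$ easily absorbs the small constant $(i(i+1)-j)/8$. So the proof reduces to exhibiting, for each even $j$, at least one $i$ making the row entry divisible by $8$.

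First I would compute the residues $i(i+1) \bmod 8$ for $i = 0,1,\ldots,7$; a direct evaluation gives the list $0,\,2,\,6,\,4,\,4,\,6,\,2,\,0$, so the set of residues realized by $i(i+1)$ modulo $8$ is exactly $\{0,2,4,6\}$. In particular, every even digit of the base-eight system occurs, and in fact each occurs twice among the eight values of $i$.

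Next I would present this information in the form of a $4\times 8$ difference table analogous to Table~\ref{tab:1} (rows indexed by $j \in \{0,2,4,6\}$, columns by $i(i+1)$ for $0 \le i \le 7$), highlighting the entries divisible by $8$. For each row at least one such entry exists, which together with the observation about $m$ above establishes that the basic equation~(\ref{eq:4}) is solvable in nonnegative integers for every $j \in \{0,2,4,6\}$. The argument concludes by noting that odd $j$ are excluded on parity grounds, since $T_n = n(n+1)$ is always even.

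The whole argument is mechanical; the only substantive point is the completeness observation that the multiset $\{i(i+1) \bmod 8 : 0 \le i \le 7\}$ covers all even residues modulo $8$. Since this is a finite check on eight numbers, I do not anticipate any genuine obstacle—this is exactly the feature that separates the base-eight case from the base-ten case of Proposition~2, where the analogous multiset misses the residues $4$ and $8$.
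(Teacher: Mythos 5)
Your proposal is correct and follows essentially the same route as the paper: for $L=8$ you check that every even $j\in\{0,2,4,6\}$ is realized as a residue of $i(i+1)$ modulo $8$, which is exactly the content of the paper's $8\times 8$ difference table. Your added remarks (that $m$ is nonnegative for large $k$, and that odd $j$ are excluded by parity) only make explicit what the paper leaves implicit.
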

\begin{proof}
A $8\times 8$ difference table should be considered. Table \ref{tab:2} shows that for every $j \in \{0,\ 2,\ 4,\ 6\}$ at least one difference value can be found that is divisible by $8$ without a remainder. It means that all digits of the base-eight numbering system may appear as unit's digits in base-eight minor totals of the sequence of even integers.

\begin{table}[htb]
  \centering
    \begin{tabular}{rrrrrrrrrrrr}
    \hline
    $j$&&\multicolumn{8}{c}{$i(i + 1)$}\\ \cline{3-10}
     &    & 0     & 2     & 6     & 12     & 20    & 30    & 42 & 56\\
    \hline
    \textbf{0} &   & \cellcolor{lightgray} 0    & 2     & 6     & 12     & 20    &30    & 42 &\cellcolor{lightgray}  56 \\
    \textbf{2} &   & -2    & \cellcolor{lightgray} 0     & 4     &10     & 18    &  28   & \cellcolor{lightgray} 40 & 54 \\
    \textbf{4} &   & -4    & -2    & 2     &\cellcolor{lightgray} 8     &\cellcolor{lightgray} 16    & 26    & 38 & 52 \\
    \textbf{6} &   & -6    & -4    & \cellcolor{lightgray} 0  & 6     & 14     &\cellcolor{lightgray} 24    & 36 & 50 \\
    \hline
    \end{tabular}
  \caption{A $8\times 8$ difference table. Values of differences divisible by $8$ without a remainder are given on gray background. Values if $j$ satisfying the condition ``$m$ is a nonnegative integer'' are given in bold face.}    
  \label{tab:2}
\end{table}
\end{proof}

\begin{remark}
Interestingly, Table \ref{tab:1} predicts that \textit{odd} digits do can appear as unit's digits in minor totals of the sequence of \textit{even} integers, at least in some peculiar cases. For example, let us consider sequence of even integers in base-five numbering system. For the system, the row of the difference table for $j = 1$ contains $-1,\ 1,\ 5,\ 11,\ 19$, which means that unity should appear as unit's digit in the minor totals (because $5$ is divided by $L = 5$ without a remainder). In fact, $2_5 + 4_5 = 11_5$ (because $6_{10} = 11_5$).
\end{remark}

\section{Sequence of odd integers}
The sequence of odd integers $1 + 3 + 5 + 7 +\dots$ has a well known expression for the minor totals of the first $n$ members as
\begin{equation}
U_n = n^2,
\label{eq:7}
\end{equation}
$U$ being the minor total of the sequence.

Empirical observations show that some digits do not appear as unit's digits in minor totals of the sequence not only for base-ten numbering system but for all bases from $3$ to $10$. Figure \ref{fig3} demonstrates that gaps in the empirical frequency distributions appear both in base-ten and base-eight systems.

\begin{figure}[tb]
\begin{minipage}[h]{0.49\textwidth}
\center{\includegraphics[width=0.8\textwidth]{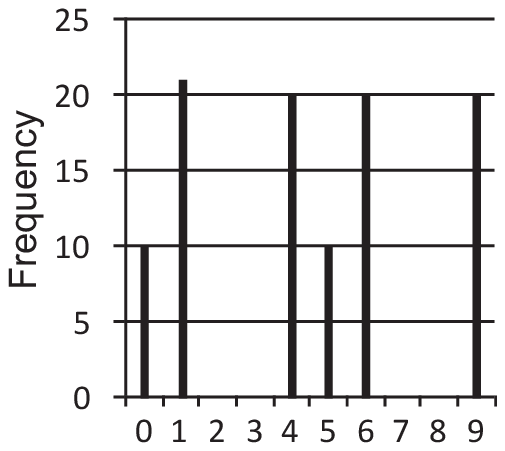} \\ a}
\end{minipage}
\hfill
\begin{minipage}[h]{0.49\textwidth}
\center{\includegraphics[width=0.8\textwidth]{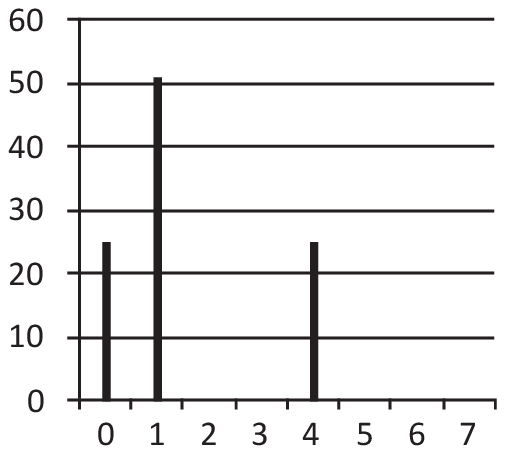} \\ b}
\end{minipage}
\caption{Empirical appearance of unit's digits in minor totals of the sequence of odd numbers. \textbf{a} -- base-ten numbering system; \textbf{b} -- base-eight system. The frequencies are computed for first hundred minor totals.}
\label{fig3}
\end{figure}

\begin{proposition}
In all numbering systems from base-three to base-ten, there are gaps in frequency distributions (zero frequencies) of unit's digits of minor totals of the sequence of odd integers.
\end{proposition}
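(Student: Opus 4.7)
The plan is to adapt the difference-table framework used earlier to the squaring map. Starting from $U_n = n^2$ and the basic equation $U_{Lk+i} = L\cdot m + j$, expanding $(Lk+i)^2$ and solving for $m$ gives
\begin{equation*}
m = Lk^2 + 2ki + \frac{i^2 - j}{L}.
\end{equation*}
Since $Lk^2 + 2ki$ is a nonnegative integer that can be made arbitrarily large by choice of $k$, the equation admits a solution in nonnegative integers $k,m$ if and only if $L \mid (i^2 - j)$ for some $i \in \{0, 1, \ldots, L-1\}$. In other words, the set of admissible unit's digits $j$ in base $L$ is precisely the image of the map $i \mapsto i^2 \bmod L$, so the $L\times L$ difference table from the earlier sections collapses here to a single column built from $0^2, 1^2, \ldots, (L-1)^2$.

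The proposition therefore reduces to the claim that this image is a \emph{proper} subset of $\{0, 1, \ldots, L-1\}$ for every $L$ in the range $3 \leq L \leq 10$. The cleanest uniform argument is pigeonhole: for every $L \geq 3$ one has $1^2 \equiv 1 \pmod{L}$ and $(L-1)^2 = L^2 - 2L + 1 \equiv 1 \pmod{L}$, while $1$ and $L-1$ are distinct as residues modulo $L$. Hence the squaring map from $\{0, 1, \ldots, L-1\}$ to itself is not injective, therefore not surjective, and at least one digit $j$ must be missed for each such $L$.

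To make the gaps explicit (the statement concerns presence of zero frequencies in each base), I would tabulate the single column of residues $i^2 \bmod L$ for $i = 0, 1, \ldots, L-1$ in each base and read off the complement. This finite check produces, for example, the missing digit $2$ in base $3$; $\{2,3\}$ in bases $4$ and $5$; $\{2,5\}$ in base $6$; $\{3,5,6\}$ in base $7$; $\{2,3,5,6,7\}$ in base $8$; $\{2,3,5,6,8\}$ in base $9$; and $\{2,3,7,8\}$ in base $10$, matching the empirical distribution in Figure~\ref{fig3}.

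The main obstacle is presentational rather than mathematical: displaying eight separate difference tables would be cumbersome, so I would lead with the pigeonhole non-surjectivity argument as the heart of the proof and include only one or two representative columns (say $L=8$ and $L=10$) as illustration, treating the remaining bases as a routine finite verification.
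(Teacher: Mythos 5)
Your proposal is correct, and it takes a genuinely different route from the paper. The paper's proof is a pure finite verification: it builds the nested difference tables from $3\times 3$ up to $10\times 10$ and inspects each $j$-row for the absence of entries divisible by the base, reading off the missing digits case by case. You instead observe that the admissible unit's digits in base $L$ are exactly the image of the squaring map $i \mapsto i^2 \bmod L$ on $\{0,\dots,L-1\}$, and then dispose of all eight bases at once by pigeonhole: since $1^2 \equiv (L-1)^2 \equiv 1 \pmod{L}$ and $1 \neq L-1$ for $L \geq 3$, the map is not injective, hence not surjective, so some digit is always missed. Your explicit lists of missing digits agree with the paper's in every base. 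What your approach buys is significant: the uniform argument proves the gap exists for \emph{every} base $L \geq 3$, not merely $3 \leq L \leq 10$, and thereby settles the statement that the paper leaves only as a hypothesis in the remark following this proposition (that base two is the unique base with no gap). The paper's tabular approach buys nothing extra here beyond the explicit identification of which digits are missing in each base, which your finite check also supplies.
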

\begin{proof}
It is necessary to consider the difference tables from $3 \times 3$ to $10 \times 10$ and to check the differences for the divisibility by $3$ to $10$, correspondingly. Technically, the tables are nested one into each other, the table $10 \times 10$ containing all others (Table \ref{tab:3}).

The inspection of the tables shows that:
\begin{itemize}
\item[-] $U_n \not\equiv \mathsf{D_{10}} \pmod {10}, \mathsf{D_{10}} \in \{2,\ 3,\ 7,\ 8\}$ (there are no differences divisible by $10$ without a remainder in rows of $j \in \{2,\ 3,\ 7,\ 8\}$)
\item[-] $U_n \not\equiv \mathsf{D_{9}} \pmod {9}, \mathsf{D_{9}} \in \{2,\ 3,\ 5,\ 6,\ 8\}$
\item[-] $U_n \not\equiv \mathsf{D_{8}} \pmod {8}, \mathsf{D_{8}} \in \{2,\ 3,\ 5,\ 6,\ 7\}$
\item[-] $U_n \not\equiv \mathsf{D_{7}} \pmod {7}, \mathsf{D_{7}} \in \{3,\ 5,\ 6\}$
\item[-] $U_n \not\equiv \mathsf{D_{6}} \pmod {6}, \mathsf{D_{6}} \in \{2,\ 5\}$
\item[-] $U_n \not\equiv \mathsf{D_{5}} \pmod {5}, \mathsf{D_{5}} \in \{2,\ 3\}$
\item[-] $U_n \not\equiv \mathsf{D_{4}} \pmod {4}, \mathsf{D_{4}} \in \{2,\ 3\}$
\item[-] $U_n \not\equiv 2 \pmod {3}$
\end{itemize}

Therefore, in minor totals of odd integer sequence under all numbering systems from bases $3$ to $10$ there are values of $j$ that do not appear as unit's digits of the minor totals.

\begin{table}[htb]
  \centering
  
    \begin{tabular}{rrrrrrrrrrrr}
    \hline
    $j$&&\multicolumn{10}{c}{$i^2$}\\ \cline{3-12}
     &    & 0     & 1     & 4     & 9      & 16    & 25    & 36 & 49 & 64 & 81 \\
    \hline
    0 &   &  \textbf{0}    & \multicolumn{1}{r|}{1}     & 4     & 9      & 16    & 25    & 36 & 49 & 64 & 81 \\
    1&    & -1    & \multicolumn{1}{r|}{\textbf{0}}     & 3     & 8      & 15    & 24    & 35 & 48 & 63 & 80 \\ \cline{3-4}
    2 &   & -2    & -1    &\multicolumn{1}{r|}{2}     & 7      & 14    & 23    & 34 & 47 & 62 & 79 \\ \cline{5-5}
    3 &   & -3    & -2    & 1     &\multicolumn{1}{r|}{6}      & 13    & 22    & 33 & 46 & 61 & 78 \\ \cline{6-6}
    4 &   & -4    & -3    & \textbf{0}     & 5      &\multicolumn{1}{r|}{12}    & 21    & 32 & 45 & 60 & 77 \\ \cline{7-7}
    5 &   & -5    & -4    &-1     & 4      & 11    &\multicolumn{1}{r|}{20}    & 31 & 44 & 59 & 76 \\ \cline{8-8}
    6 &   & -6    & -5    &-2     & 3      & 10    & 19    &\multicolumn{1}{r|}{30} & 43 & 58 & 75 \\ \cline{9-9}
    7 &   & -7    & -6    &-3     & 2      &  9    & 18    & 29 &\multicolumn{1}{r|}{42} & 57 & 74 \\ \cline{10-10}
    8 &   & -8    & -7    &-4     & 1      &  8    & 17    & 28 & 41 &\multicolumn{1}{r|}{56} & 73 \\ \cline{11-11}
    9 &   & -9    & -8    &-5     & \textbf{0}      &  7    & 16    & 27 & 40 & 55 & 72 \\
    \hline
    \end{tabular}
    \caption{A $10\times 10$ difference table. Short vertical and horizontal lines delimit all other difference tables (bases from $2$ to $9$) nested in to the $10\times 10$ table. Zero values of the differences are highlighted in bold face.}
  \label{tab:3}
\end{table}
\end{proof}

\begin{remark}
Obviously, zero values of the differences in tables ensure that the correspondent $j$ value appear as unit's digits in minor totals of an integer sequence. As for the odd integer sequence, it may hypothesized that all digits appear as unit's digits in the only case of base-two numbering system. So, hypothetically, $\forall L > 2 \ \exists j \ \forall i \ (i^2 - j) \equiv \mathsf{D}, \mathsf{D} \neq 0 \pmod L$, $L$ being the base of numbering system and $j,\ i$ being nonnegative integers, $0 \leq  j,\ i \leq L - 1$.
\end{remark}

\section{A case of Faulhaber polynomials}
Natural sequence $1,\ 2,\ 3,\ 4, \dots$ is a particular case of sequence of powers like  $1^p,\ 2^p,$ $ 3^p,\ 4^p, \dots$, $p$ being a nonnegative integer. Minor totals of the latter is known to be Faulhaber polynomials:
\begin{equation}
1^p + 2^p + 3^p + 4^p, \dots
\label{eq:8}
\end{equation}
For definite $p$ in Exp.~(\ref{eq:8}), formulas for the minor totals are known. Let us consider a case of $p = 2$ with known minor total $V_n$:
\begin{equation}
V_n = 1^2 + 2^2 + 3^2 + 4^2 + \dots + n^2 = \frac{n(n + 1)(2n + 1)}{6}.
\label{eq:9}
\end{equation}
Application of the basic Eq.~(\ref{eq:2}) to Eq.~(\ref{eq:9}) gives
\begin{equation}
V_{Lk + i} = \frac{(Lk + i)(Lk + (i + 1))(2Lk + (2i + 1))}{6} = L\cdot m + j.
\label{eq:10}
\end{equation}
Extracting of $m$ from Eq.~(\ref{eq:10}) gives
\begin{eqnarray}
m&= & \Big( 2L^2k^3 + 3Lk^2(2i+1) + k\bigl( 2(i+1)(2i+1) + \nonumber \\
&+ &i(2i+1) + 2i(i+1)\bigr)\Big) /6 + \label{eq:11}\\
&+ &\frac{\cfrac{i(i + 1)(2i + 1)}{6}- j}{L}. \label{eq:12}
\end{eqnarray}
The term (\ref{eq:11}) may be an integer or it may have remainders from division by $6$. The remainders obviously belong to $\{1/6,\ 2/6,\ 3/6,\ 4/6,\ 5/6\}$. The term (\ref{eq:12}) represents results of divisions of the difference table values $\frac{i(i + 1)(2i + 1)}{6}- j$ by $L$. Because $m$ must be a nonnegative integer the remainders in (\ref{eq:12}) must be either zero or a multiple of $1/6$ complementary to the remainders in term (\ref{eq:11}), i.e., producing the unity after summation.
\begin{proposition}
Under base-ten numbering system, digits $2,\ 3,\ 7,\ 8$ do not appear as unit's digits in the minor totals $V_n$ of Faulhaber polynomial with $p = 2$, i.e., $V_n \not\equiv \mathsf{E} \pmod {10}, \mathsf{E} \in \{ 2,\ 3,\ 7,\ 8\}$.

Under base-eight numbering system, all the digits of the system appear as unit's digits in the minor totals $V_n$ of Faulhaber polynomial with $p = 2$, i.e., $V_n \equiv \mathsf{E} \pmod {8}, \mathsf{E} \in \{0,\ 1,\ 2,\ 3,\ 4,\ 5,\ 6,\ 7\}$.
\end{proposition}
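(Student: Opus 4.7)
The plan is to mirror the difference-table argument of Propositions~1 and~2, with the additional bookkeeping forced by the fact that the $k$-dependent piece~(\ref{eq:11}) now also carries a division by~$6$. First I would list the ten entries $f(i) := i(i + 1)(2i + 1)/6$ for $i = 0, 1, \ldots, 9$, namely $0, 1, 5, 14, 30, 55, 91, 140, 204, 285$, and use them as the top row of a $10 \times 10$ difference table whose $(j,i)$-cell is $f(i) - j$; the first eight columns simultaneously yield the $8 \times 8$ table needed for the second statement.

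According to the criterion stated just after Eq.~(\ref{eq:12}), the digit $j$ appears as a unit's digit in some $V_n$ precisely when the corresponding row contains a cell $f(i) - j$ whose quotient by $L$ has a fractional part that complements some attainable fractional part of term~(\ref{eq:11}), so that the two add to an integer. For each fixed $i$ the fractional part of~(\ref{eq:11}) ranges, as $k$ runs over the nonnegative integers, through a determinable subset of $\{0, 1/6, \ldots, 5/6\}$, so only finitely many residues need to be inspected. The verification then proceeds row by row, either by exhibiting an explicit witness $(i, k)$ that achieves the complementarity, or by a case-by-case check that no such matching is possible.

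For base ten the six allowed rows admit immediate witnesses: for instance $V_1 = 1$, $V_2 = 5$, $V_3 = 14$, $V_4 = 30$, $V_{11} = 506$, $V_{13} = 819$ realise the residues $1, 5, 4, 0, 6, 9$ modulo $10$; whereas for each $j \in \{2, 3, 7, 8\}$ the fractional-part accounting across all $i \in \{0, \ldots, 9\}$ rules out every cell of row $j$. For base eight, witnesses for the residues $0, \ldots, 7$ are already visible among $V_1, \ldots, V_{15}$ (e.g.\ $V_{15} = 1240$ and $V_{11} = 506$ supply the residues $0$ and $2$ that are missed by the first eight minor totals). The principal obstacle, distinguishing this proposition from its predecessors, is precisely this sixth-remainder bookkeeping: in Propositions~1 and~2 the $k$-polynomial was manifestly an integer so the criterion reduced to ``some cell of row $j$ is divisible by $L$'', whereas here one must simultaneously track the residue modulo~$L$ of~(\ref{eq:12}) and the sixth-remainder of~(\ref{eq:11}) and confirm that their sum falls into the integers.
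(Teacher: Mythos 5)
Your proposal follows essentially the paper's own route: the same table of values $f(i)=i(i+1)(2i+1)/6$ for $i=0,\dots,9$, and the same criterion that the fractional part of term (\ref{eq:12}) must complement an attainable fractional part of term (\ref{eq:11}). The positive halves are in fact handled more solidly than in the paper, since your explicit witnesses among $V_1,\dots,V_{15}$ verify by direct computation that all six admissible residues occur modulo $10$ and all eight occur modulo $8$, sidestepping any need to check which remainders of (\ref{eq:11}) are actually attained. The one place you stop short is the heart of the first claim: for $j\in\{2,3,7,8\}$ you assert that ``the fractional-part accounting rules out every cell of row $j$'' without performing the accounting, and that check is the entire content of the non-appearance statement. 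It closes in two lines: the fractional part of (\ref{eq:11}) is some $a/6$ with $a\in\{0,\dots,5\}$ and that of (\ref{eq:12}) is some $b/10$ with $b\in\{0,\dots,9\}$, and $a/6+b/10=(5a+3b)/30$ is an integer only when $5a+3b\in\{0,30\}$, i.e., only for $(a,b)=(0,0)$ or $(3,5)$. Hence a cell can yield an integer $m$ only if $f(i)-j\equiv 0$ or $5\pmod{10}$; since $f(i)\bmod 10\in\{0,1,4,5\}$ for all $i$, the admissible $j$ are exactly $\{0,1,4,5,6,9\}$ and rows $2,3,7,8$ contain no admissible cell, which is precisely what the paper's Table \ref{tab:4} records.
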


\begin{proof}
It is necessary to consider a $10\times 10$ difference table and the correspondent $8\times 8$ table. Table \ref{tab:4} gives the idea of remainders from divisions of $i(i + 1)(2i + 1)/6 - j$ by $L$. Those $j$-rows are sought in which the differences are located that are divisible by $10$ either with zero remainder or with $3/6$ (that is, $1/2$) remainder. These $j$-rows are $0,\ 1,\ 4,\ 5,\ 6,\ 9$ (the differences are given on gray background). Consequently, $j$ cannot take values $2,\ 3,\ 7,\ 8$ without violating of the condition ``$m$ is a non-negative integer''. Thus, under base-ten system, integers $2,\ 3,\ 7,\ 8$ do not appear as unit's digits in minor totals of Faulhaber polynomial with $p = 2$.

On the other hand, in all $j$-rows of the $8\times 8$ table, there are differences (given in bold face in Table \ref{tab:4}) that while divided by $8$ give remainders of either $0$ or $3/6$. Therefore, all digits of base-eighth system appear as unit's digits in minor totals of Faulhaber polynomial with $p = 2$.

\begin{table}[htb]
  \centering
  
    \begin{tabular}{rrrrrrrrrrrrr}
    \hline
    $j$&&\multicolumn{10}{c}{$i(i + 1)(2i + 1)/6$}\\ \cline{3-13}
     &    & 0     & 1     & 5     & 14      & 30    & 55    & 91 & 140 & & 204 & 285 \\
    \hline
    0 &  &\cellcolor{lightgray} \textbf{0} & 1 &\cellcolor{lightgray} 5 & 14 &\cellcolor{lightgray} 30 &\cellcolor{lightgray} 55 & 91 &\cellcolor{lightgray} \textbf{140}&\multicolumn{1}{r|}{} & 204 &\cellcolor{lightgray} 285 \\
    1 &  & -1 &\cellcolor{lightgray}\textbf{0} & \textbf{4} & 13 & 29 & 54 &\cellcolor{lightgray}90 & 139 &\multicolumn{1}{r|}{} & 203 & 284 \\
    2 &  & -2 & -1 & 3 &\textbf{12} &\textbf{28} & 53 & 89 & 138 &\multicolumn{1}{r|}{} & 202 & 283 \\
    3 &  & -3 & -2 & 2 & 11 & 27 &\textbf{52} &\textbf{88} & 137 &\multicolumn{1}{r|}{} & 201 & 282 \\
    4 &  & -4 & -3 & 1 &\cellcolor{lightgray} 10 & 26 & 51 & 87 & 136 &\multicolumn{1}{r|}{} &\cellcolor{lightgray} 200 & 281 \\
    5 &  & -5 & -4 &\cellcolor{lightgray}\textbf{0} & 9 &\cellcolor{lightgray} 25 &\cellcolor{lightgray} 50 & 86 &\cellcolor{lightgray} 135&\multicolumn{1}{r|}{} & 199 &\cellcolor{lightgray} 280 \\
    6 &  & -6 & -5 & -1 &\textbf{8} &\textbf{24} & 49 &\cellcolor{lightgray} 85 & 134 &\multicolumn{1}{r|}{} & 198 & 279 \\
    7 &  & -7 & -6 & -2 & 7 & 23 &\textbf{48} &\textbf{84} & 133 &\multicolumn{1}{r|}{} & 197 & 278 \\ \cline{3-11}
    8 &  & -8 & -7 & -3 & 6 & 22 & 47 & 83 & 132 & & 196 & 277 \\
    9 &  & -9 & -8 & -4 &\cellcolor{lightgray} 5 & 21 & 46 & 82 & 131 & &\cellcolor{lightgray} 195 & 276 \\
    \hline
    \end{tabular}
    \caption{A $10\times 10$ difference table. Vertical and horizontal lines delimit a $8\times 8$ difference table nested in the $10\times 10$ table. Values divisible by $10$ with the remainders $0$ and $3/6$ are given on gray background. Values divisible by $8$ with the remainders $0$ and $3/6$ are highlighted in bold face.}
  \label{tab:4}
\end{table}
\end{proof}

\begin{remark}
Empirical observations (Fig. \ref{fig4}) support the inferences.

Obviously, base-six numbering system is a special case in the context of Eq.~(\ref{eq:11}-\ref{eq:12}) because any division by $6$ will produce remainders of either $0$ or multiples of $1/6$. Thus, all the digits from $0$ to $5$ will appear as unit's digits in the minor totals of Faulhaber polynomial with $p = 2$ under base-six system.

Also, it may be noted that the frequency distributions of unit's digits in squared integers (Fig. \ref{fig3}) and in the sums of squared integers (Fig. \ref{fig4}) have the same gaps -- $\{ 2,\ 3,\ 7,\ 8\}$.
\end{remark}

\begin{figure}[tb]
\begin{minipage}[h]{0.49\textwidth}
\center{\includegraphics[width=0.8\textwidth]{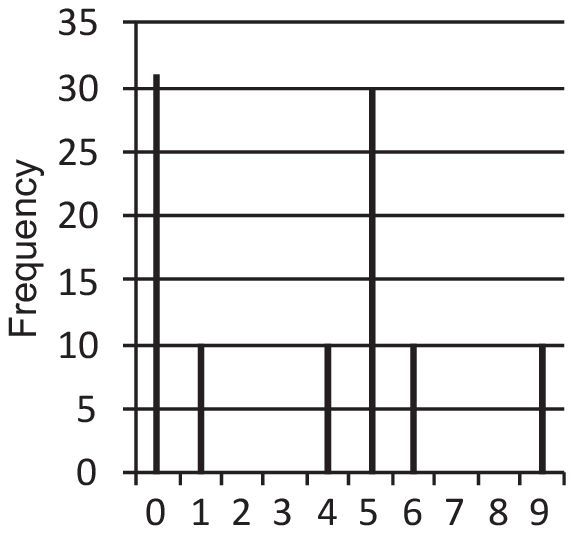} \\ a}
\end{minipage}
\hfill
\begin{minipage}[h]{0.49\textwidth}
\center{\includegraphics[width=0.8\textwidth]{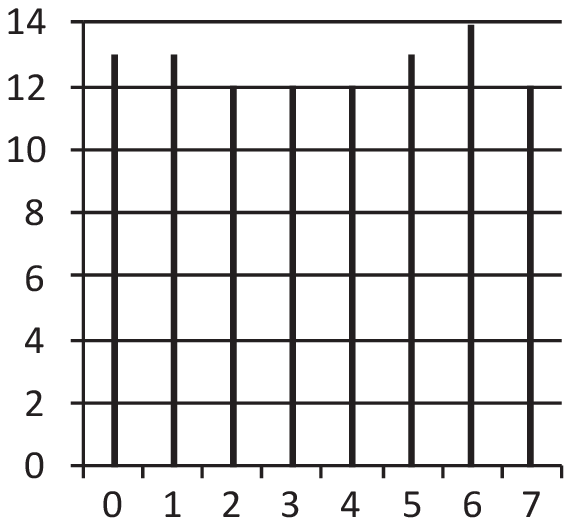} \\ b}
\end{minipage}
\caption{Empirical appearance of unit's digits in Faulhaber polynomials. \textbf{a} -- base-ten numbering system; \textbf{b} -- base-eight system. The frequencies are computed for first hundred minor totals.}
\label{fig4}
\end{figure}

\section{Conclusion}
The approach of difference tables allows one to make predictions regarding appearance of unit's digits of minor totals of a number of integer sequences. In this study, I considered a question as to whether all digits of a numbering system can appear as unit's digits in minor totals of i) the sequence of even integers, ii) sequence of odd integers, and in iii) Faulhaber polynomials with $p = 2$.

Many integer sequences \cite{Gavrikov} have gappy frequency distributions of unit's digits in their minor totals. This means that some digits never appear as unit's digits in the minor totals. This property varies however dependently on numbering system applied and the difference tables can show why the gaps happen. Particularly, it has been shown that minor totals of odd numbers (i.e., squared integers) have gaps in frequencies of unit's digits under all the numbering systems from bases $3$ to $10$.

A limitation is that the approach works only if a formula for minor totals is known. For example, no minor total formula is known for the sequence of primes. It is empirically easy to show that all the digits appear as unit's digits in the minor totals of primes independently of numbering system base (from 2 to 10) but no analytical method can be applied to prove it.

\newpage
\bibliographystyle{jis}
\bibliography{mybibfile}

\bigskip
\hrule
\bigskip

\noindent 2010 {\it Mathematics Subject Classification}:
Primary 11Axx; Secondary 62P10.

\noindent \emph{Keywords: }
Sequence of even integers, sequence of odd integers, Faulhaber polynomials, minor total, unit's digits.

\bigskip
\hrule
\bigskip

\noindent (Concerned with sequences
\seqnum{A005843}, \seqnum{A005408}, \seqnum{A000290}.)

\end{document}